\newcommand{\Q}{{\mathbb{Q}}}
\newcommand{\F}{{\mathbb{F}}}
\newcommand{\C}{{\mathbb{C}}}
\newcommand{\Z}{{\mathbb{Z}}}
\newcommand{\K}{{\mathbb{K}}}
\newcommand{\bB}{{\boldsymbol{B}}}
\newcommand{\bC}{{\boldsymbol{C}}}
\newcommand{\bG}{{\boldsymbol{G}}}
\newcommand{\bH}{{\boldsymbol{H}}}
\newcommand{\bL}{{\boldsymbol{L}}}
\newcommand{\bP}{{\boldsymbol{P}}}
\newcommand{\bT}{{\boldsymbol{T}}}
\newcommand{\bW}{{\boldsymbol{W}}}
\newcommand{\bZ}{{\boldsymbol{Z}}}
\newcommand{\fX}{{\mathfrak{X}}}
\newcommand{\fY}{{\mathfrak{Y}}}
\newcommand{\cE}{{\mathcal{E}}}
\newcommand{\cT}{{\mathcal{T}}}
\newcommand\Irr{\operatorname{Irr}}
\newcommand\Unip{\operatorname{Uch}}
\newtheorem{thm}{Theorem}[section]
\theoremstyle{definition}
\newtheorem{exmp}[thm]{Example}
\newtheorem{abs}[thm]{}
\theoremstyle{remark}
\newtheorem{rem}[thm]{Remark}
\renewcommand{\leq}{\leqslant}
\begin{document}
\title[Character tables of $F_4(2^f)$]{The character table of the finite
Chevalley group $F_4(q)$ for $q$ a power of~$2$}
\author{Meinolf Geck}
\address{Lehrstuhl f\"ur Algebra, FB Mathematik\\ Universit\"at Stuttgart\\ 
Pfaffenwaldring 57\\ 70569 Stuttgart, Germany}
\email{meinolf.geck@mathematik.uni-stuttgart.de}

\subjclass{Primary 20C33; Secondary 20G40}
\keywords{Chevalley group, Lusztig induction, unipotent characters}
\date{March 6, 2023}

\begin{abstract} Let $q$ be a prime power and $F_4(q)$ be the Chevalley 
group over a finite field with $q$ elements. Marcelo--Shinoda (1995) 
determined the values of the unipotent characters of $F_4(q)$ on all 
unipotent elements, extending earlier work by Kawanaka and Lusztig to
small characteristics. Assuming that $q$ is a power of~$2$, we explain
how to construct the complete character table of~$F_4(q)$. 
\end{abstract}

\maketitle

\section{Introduction} \label{sec0}

Let $p$ be a prime and $k=\overline{\F}_p$ be an algebraic closure of
the field with $p$ elements. Let~$\bG$ be a connected reductive algebraic
group over $k$ and assume that $\bG$ is defined over the finite subfield
$\F_q\subseteq k$, where $q$ is a power of~$p$. Let $F\colon \bG\rightarrow 
\bG$ be the corresponding Frobenius map. The finite group of fixed
points $\bG^F$ is called a ``finite group of Lie type''. We are concerned 
with the problem of computing the character table of $\bG^F$. The work of 
Lusztig \cite{LuB}, \cite{L7} has led to a general program for solving 
this problem. 

However, in concrete examples, there are still a certain number of 
technical~---~and sometimes quite intricate~---~issues to be resolved. 
In this paper, we show how this can be done for the groups $\bG^F=F_4(q)$, 
where $q$ is a power of~$2$. The conjugacy classes have been classified 
by Shinoda \cite{Shi}; the values of all unipotent characters on unipotent
elements were already determined by Marcelo--Shinoda \cite{MaSh}. A further 
crucial ingredient is the fact that the characteristic functions of the 
$F$-invariant cuspidal character sheaves of $\bG$ (for the definition, 
see \cite{L7} and the references there) are explicitly known as linear 
combinations of the irreducible characters of $\bG^F$. Building on earlier 
work of Shoji \cite{S2}, \cite{S3}, this has been achieved in 
\cite{MaSh}, \cite{pamq}.

In Section~2 we introduce basic notation and collect some general 
results from Lusztig's theory, where we use the books \cite{rDiMi}, 
\cite{gema} as our references. In Section~3 and 4 we focus on $\bG^F=
F_4(q)$. First we consider the unipotent characters of $\bG^F$. Then we
address some issues concerning the two-variable Green functions involved 
in Lusztig's cohomological induction functor which allows us, finally, 
to consider the non-unipotent characters.

The special feature of $\bG^F=F_4(q)$ as above is that the possible root 
systems of centralisers of semisimple elements are rather restricted. 
(See Remark~\ref{rem31} below.) There is a completely similar situation 
for $\bG$ of type~$E_6$ in characteristic~$2$, assuming that $\bG$ has a
connected centre and a simply connected derived subgroup. This, as well
as the case of groups of type $E_7$ in characteristic~$2$, will be
discussed in a sequel to this paper. The values of the unipotent 
characters on unipotent elements have been recently determined by Hetz 
\cite{Het3} for these groups.

I understand that Frank L\"ubeck has already prepared an electronic
``generic'' character table of $F_4(q)$, based on some assumptions 
concerning the values of the characteristic functions of 
certain $F$-invariant character sheaves on $\bG$. With the results of 
this paper, it should now be possible to verify those assumptions 
(or adjust them appropriately).

\smallskip
\begin{abs} {\bf Notation and conventions.} \label{sub00}
The set of (complex) irreducible characters of a finite group~$\Gamma$
is denoted by $\Irr(\Gamma)$. We work over a fixed subfield $\K\subseteq 
\C$, which is algebraic over $\Q$, invariant under complex conjugation 
and ``large enough'', that is, $\K$ contains sufficiently many roots of
unity and $\K$ is a splitting field for $\Gamma$ and all of its subgroups. 
In particular, $\chi(g)\in\K$ for all $\chi\in \Irr(\Gamma)$ and $g\in
\Gamma$. Let $\mbox{CF}(\Gamma)$ be the space of $\K$-valued class 
functions on $\Gamma$. There is a standard inner product $\langle \;,\;
\rangle_\Gamma$ on $\mbox{CF}(\Gamma)$ given by $\langle f,f'
\rangle_\Gamma:= |\Gamma|^{-1}\sum_{g \in \Gamma} f(g)\overline{f'(g)}$ 
for $f,f' \in \mbox{CF}(\Gamma)$, where $x\mapsto \overline{x}$ denotes 
the automorphism of $\K$ given by complex conjugation. We denote by
$\Z \Irr(\Gamma)\subseteq \mbox{CF}(\Gamma)$ the subset consisting of
all integral linear combinations of $\Irr(\Gamma)$. Finally, if $C\subseteq 
\Gamma$ is any (non-empty) subset that is a union of conjugacy classes
of $\Gamma$, then we denote by $\varepsilon_C \in\mbox{CF}(\Gamma)$ 
the (normalised) indicator function of $C$, that is, we have 
\[\varepsilon_C(g)=\left\{\begin{array}{cl} |\Gamma|/|C| & \qquad
\mbox{if $g\in C$},\\ 0 & \qquad \mbox{otherwise}.\end{array}\right.\]
Note that, if $C$ is a single conjugacy class of $\Gamma$ and $g\in C$, 
then $f(g)=\langle f,\varepsilon_C\rangle_{\Gamma}$ for any $f\in 
\mbox{CF}(\Gamma)$. Thus, the problem of computing the values of 
$\rho\in \Irr(\bG^F)$ is equivalent to working out the inner products 
of $\rho$ with the indicator functions of the various conjugacy classes
of $\Gamma$. 
\end{abs}


\section{Lusztig induction and uniform functions} \label{secrlg}

Let $\bG,F$ be as in the introduction. Given an $F$-stable maximal torus
$\bT$ of~$\bG$ and $\theta\in \Irr(\bT^F)$, we have a generalised 
character $R_{\bT,\theta}^\bG \in \Z \Irr(\bG^F)$ as introduced by 
Deligne and Lusztig \cite{DeLu} (see also \cite[\S 2.2]{gema}). We shall 
also need the following generalisation of $R_{\bT,\theta}^\bG$. 

\begin{abs} \label{abs11} An $F$-stable closed subgroup $\bL\subseteq \bG$ 
is called a ``regular subgroup'' if $\bL$ is a Levi complement in some (not 
necessarily $F$-stable) parabolic subgroup $\bP\subseteq \bG$. Given 
such a pair $(\bL,\bP)$ we obtain an operator
\[R_{\bL\subseteq \bP}^{\bG} \colon \Z\Irr(\bL^F)\rightarrow \Z\Irr(\bG^F)
\qquad\mbox{(``Lusztig induction''; see \cite[\S 9.1]{rDiMi})}.\]
Denoting by $\bG_{\text{uni}}^F$  and $\bL_{\text{uni}}^F$ the sets of 
unipotent elements of $\bG^F$ and $\bL^F$, respectively, there is a 
corresponding two-variable Green function 
\[ Q_{\bL\subseteq \bP}^{\bG}\colon \bG_{\text{uni}}^F\times 
\bL_{\text{uni}}^F\rightarrow\Q\qquad\mbox{(see 
\cite[\S 10.1]{rDiMi})}.\]
If $\bL=\bT$ is an $F$-stable maximal torus of $\bG$ (and $\bB\subseteq 
\bG$ is a Borel subgroup containing~$\bT$), then $\bT_{\text{uni}}^F=\{1\}$ 
and $Q_{\bT}^\bG \colon \bG_{\text{uni}}^F \rightarrow \Q$, $u\mapsto 
Q_{\bT\subseteq \bB}^{\bG}(u,1)$, is the ``usual'' Green function 
originally introduced in \cite{DeLu}, that is, we have
$Q_\bT^\bG(u)=R_{\bT,1}^{\bG}(u)$ for all $u\in \bG_{\text{uni}}^F$.
\end{abs}

\begin{abs} \label{abs12} Let $\bL\subseteq \bP$ be as above and 
$\psi\in \Irr(\bL^F)$. There is a character formula which expresses 
the values of $R_{\bL\subseteq \bP}^\bG(\psi)$ in terms of the values 
of~$\psi$ and the two-variable Green functions for $\bG$ and for groups 
of the form $C_{\bG}^\circ(s)$ where $s\in \bG^F$ is semisimple; 
see \cite[Prop.~10.1.2]{rDiMi}, \cite[Prop.~6.2]{L3} for the precise 
formulation. For later reference we only state here the following
special case:
\begin{equation*}
R_{\bL\subseteq \bP}^{\bG}(\psi)(u)=\sum_{v \in \bL_{\text{uni}}^F}
Q_{\bL\subseteq \bP}^{\bG}(u,v)\psi(v) \qquad\mbox{for all
$u\in \bG_{\text{uni}}^F$}.\tag{a}
\end{equation*}
We also state the following useful formula. Let $g\in \bG^F$ and consider 
the Jordan decomposition of $g$, that is, we write $g=su=us$ where 
$s\in \bG^F$ is semisimple and $u\in \bG^F$ is unipotent. If 
$C_{\bG}^\circ(s)\subseteq \bL$, then
\begin{equation*}
\rho(g)=\sum_{\psi \in \Irr(\bL^F)} \big\langle R_{\bL\subseteq 
\bP}^{\bG}(\psi),\rho\big\rangle_{\bG^F}\psi(g)\qquad\mbox{for all
$\rho\in \Irr(\bG^F)$}.\tag{b}
\end{equation*}
This appeared in K. D. Schewe's dissertation (Bonner Mathematische 
Schrif\-ten, vol.~165, 1985); see the remark following 
\cite[Cor.~3.3.13]{gema} for a proof.
\end{abs}

\begin{abs} \label{abs13} Let us denote by $\fX(\bG,F)$ the set of all 
pairs $(\bT,\theta)$ where $\bT\subseteq \bG$ is an $F$-stable maximal 
torus and $\theta\in \Irr(\bT^F)$. Following \cite[p.~16]{cbms}, a class
function $f\in \mbox{CF}(\bG^F)$ is called ``uniform'' if $f$ can be
written as a $\K$-linear combination of the generalised characters 
$R_{\bT,\theta}^{\bG}$ for various pairs $(\bT, \theta)\in \fX(\bG,F)$. 
If $f$ is uniform, then we have (see \cite[Prop.~10.2.4]{rDiMi}):
\[ f=|\bG^F|^{-1}\sum_{(\bT,\theta)\in \fX(\bG,F)} |\bT^F| \langle 
f,R_{\bT,\theta}^\bG\rangle_{\bG^F}R_{\bT,\theta}^\bG.\]
For example, if $C$ is a conjugacy class of semisimple elements of 
$\bG^F$, then the indicator function $\varepsilon_C$ (as in (\ref{sub00}))
is uniform; see \cite[Cor.~10.3.4]{rDiMi}.
\end{abs}

\begin{thm} \label{unifc} Let $\bC$ be an arbitrary $F$-stable conjugacy 
class of $\bG$. Then the indicator function $\varepsilon_{\bC^F}$ of the 
set $\bC^F$ is a uniform function. \\ (Note that, in general, $\bC^F$ is 
a union of conjugacy classes of~$\bG^F$.)
\end{thm}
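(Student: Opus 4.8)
The plan is to construct $\varepsilon_{\bC^F}$ explicitly as a $\K$-linear combination of Deligne--Lusztig characters, bootstrapping from the semisimple case recalled in~\ref{abs13} (that is, \cite[Cor.~10.3.4]{rDiMi}) by means of the character formula of~\ref{abs12}. If $\bC^F=\emptyset$ then $\varepsilon_{\bC^F}=0$ is trivially uniform, so assume $\bC^F\neq\emptyset$ and fix $g=su=us\in\bC^F$ with $s\in\bG^F$ semisimple and $u\in\bG^F$ unipotent; this is possible because the Jordan decomposition commutes with~$F$. Put $\bH:=C_{\bG}^\circ(s)$, a connected reductive $F$-stable group whose $F$-stable maximal tori are exactly the $F$-stable maximal tori of~$\bG$ containing~$s$, and let $\mathcal{O}\subseteq\bH$ be the geometric unipotent class of~$u$ in~$\bH$. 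The class~$\bC$ is then determined by the $\bG$-class of~$s$ together with~$\mathcal{O}$.

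First I would reduce to the unipotent case inside~$\bH$. In the full Deligne--Lusztig character formula (\cite[Prop.~10.1.2]{rDiMi}, which generalises~\ref{abs12}(a)), the value $R_{\bT,\theta}^{\bG}(s'u')$ at a Jordan-decomposed element is a sum, over $x\in\bG^F$ with $x^{-1}s'x\in\bT$, of terms $\theta(x^{-1}s'x)$ times a Green function of $C_{\bG}^\circ(s')$ evaluated at~$u'$. I would sum the $R_{\bT,\theta}^{\bG}$ over all pairs $(\bT,\theta)$ with $\bT$ containing a conjugate of~$s$, weighting each by $\overline{\theta(s)}$ together with further coefficients depending only on~$\bT$. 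The inner sum over $\theta\in\Irr(\bT^F)$, via $\sum_{\theta}\overline{\theta(s)}\,\theta(x^{-1}s'x)=|\bT^F|$ if $x^{-1}s'x=s$ and $0$ otherwise, localises the semisimple part~$s'$ to the $\bG^F$-class of~$s$, exactly as in the proof of the semisimple case; what survives on the unipotent part is a prescribed linear combination of the Green functions $Q_{\bT'}^{\bH}$. Choosing the $\bT$-coefficients so that this combination equals $\varepsilon_{\mathcal{O}^F}$ inside $\bH^F$ then produces~$\varepsilon_{\bC^F}$, which is visibly uniform. It therefore suffices to treat the case $s=1$, that is, an arbitrary geometric unipotent class.

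For the unipotent case I would argue by orthogonality. Each Green function $Q_{\bT'}^{\bH}$, extended by zero to all of~$\bH^F$, is itself uniform, because $\sum_{\theta\in\Irr((\bT')^F)}R_{\bT',\theta}^{\bH}$ is supported on the unipotent set and equals $|(\bT')^F|\,Q_{\bT'}^{\bH}$ there. Hence a unipotently supported class function is uniform if and only if it lies in the span of the $Q_{\bT'}^{\bH}$, equivalently if and only if it is orthogonal to every unipotently supported~$h$ with $\langle h,Q_{\bT'}^{\bH}\rangle_{\bH^F}=0$ for all~$\bT'$. For $\varepsilon_{\mathcal{O}^F}$ this reduces to the single numerical condition
\[ \sum_{v\in\mathcal{O}^F} h(v)=0 \qquad\text{for every such }h. \]
Now the rational classes making up $\mathcal{O}^F$ are parametrised, via Lang--Steinberg, by the $F$-conjugacy classes of the component group $A_{\bH}(u)=C_{\bH}(u)/C_{\bH}^\circ(u)$, while the functions~$h$ spanning the non-uniform part are attached to the \emph{non-trivial} $\bH$-equivariant local systems on unipotent classes, that is, to non-trivial characters of the groups $A_{\bH}(u')$. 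On a single geometric class the required vanishing is exactly the orthogonality of such a character over the rational forms.

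I expect the Jordan reduction and the $\theta$-summation of the first two paragraphs to be essentially formal, once~\ref{abs12} and the semisimple case are in hand. The real obstacle is the unipotent statement: identifying the non-uniform unipotently supported class functions and verifying $\sum_{v\in\mathcal{O}^F}h(v)=0$ for all of them, while controlling the interference created by the closure relations among unipotent classes. This is the point at which Lusztig's theory of character sheaves---equivalently, the orthogonality relations for the generalised Green functions, cf.\ \cite{L7}---has to be invoked; it is the only step that is not purely formal.
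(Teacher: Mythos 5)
The paper gives no proof of this theorem; it only cites the appendix of \cite{mylaus} (see also \cite[Cor.~13.3.5]{rDiMi} and \cite[Thm.~2.7.11]{gema}), and your two-step strategy --- localising the semisimple part by the $\overline{\theta(s)}$-weighted sum over $\theta\in\Irr(\bT^F)$ so as to reduce to a unipotently supported statement in $C_\bG^\circ(s)$, then describing the uniform part of the unipotently supported class functions via Lusztig's (generalised) Green functions --- is exactly the strategy of that cited proof, so this is essentially the same approach. One imprecision in your final step should be flagged: the orthogonal complement of $\operatorname{span}\{Q_{\bT'}^{\bH}\}$ inside the unipotently supported class functions is spanned by the functions $Y_\iota$ attached to pairs $\iota=(\mathcal{O}',\mathcal{E})$ lying \emph{outside the principal block} of the generalised Springer correspondence, not by all pairs with non-trivial $\mathcal{E}$ (non-trivial local systems do occur in the ordinary Springer correspondence and then contribute to the \emph{uniform} span); since every pair outside the principal block nevertheless carries a non-trivial local system, your orthogonality computation $\sum_{v\in\mathcal{O}^F}Y_\iota(v)=0$ still applies to a spanning set of the non-uniform part, and the argument closes. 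Finally, the ``interference created by the closure relations'' you worry about does not arise at this stage: each $Y_\iota$ is supported on a single geometric class, so the closure order only enters in passing between the $X_\iota$- and $Y_\iota$-bases, not in the orthogonality check against $\varepsilon_{\mathcal{O}^F}$.
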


\begin{proof} See the appendix of \cite{mylaus}; this was conjectured 
by Lusztig \cite[2.16]{cbms}. See also \cite[Cor.~13.3.5]{rDiMi} and 
\cite[Theorem~2.7.11]{gema}. 
\end{proof}

\begin{exmp} \label{unifc1} Let $g\in \bG^F$ and assume that $C_\bG(g)$
is connected. Let $\bC$ be the $\bG$-conjugacy class of $g$. Since $C_\bG
(g)$ is connected, $C:=\bC^F$ is a single conjugacy class of $\bG^F$; see 
\cite[Example~1.4.10]{gema}. Now $\varepsilon_C$ is uniform by
Theorem~\ref{unifc}. Let $\rho\in \Irr(\bG^F)$. Recall from (\ref{sub00}) 
that $\rho(g)= \langle \rho,\varepsilon_C\rangle_{\bG^F}$ and 
$\langle \varepsilon_C,R_{\bT,\theta}^\bG\rangle_{\bG^F}=R_{\bT,
\theta^{-1}}^\bG(g)$ for any $(\bT,\theta)\in \fX(\bG,F)$. Hence, using 
(\ref{abs13}), we obtain the formula:
\[ \rho(g)=|\bG^F|^{-1}\sum_{(\bT,\theta)\in \fX(\bG,F)} |\bT^F|\,
\langle R_{\bT,\theta}^\bG,\rho\rangle_{\bG^F}\,R_{\bT,\theta^{-1}}^\bG(g).\]
This shows that the value $\rho(g)$ is determined by the multiplicities
$\langle R_{\bT,\theta}^\bG,\rho \rangle_{\bG^F}$ and the values
$R_{\bT,\theta}^\bG(g)$, where $(\bT,\theta)$ runs over all pairs in 
$\fX(\bG,F)$.
\end{exmp}

\begin{abs} \label{abs21} We say that $\rho\in \Irr(\bG^F)$ is ``unipotent'' 
if $\langle R_{\bT,1}^{\bG},\rho\rangle_{\bG^F} \neq 0$ for some 
$F$-stable maximal torus $\bT\subseteq \bG$. We denote by $\Unip(\bG^F)$ 
the set of unipotent characters of $\bG^F$. As shown in Lusztig's book 
\cite{LuB}, these characters play a special role in the character theory 
of $\bG^F$; many questions about arbitrary characters of $\bG^F$ can be 
reduced to unipotent characters. 
\end{abs}

\section{The unipotent characters for $F_4$ in characteristic~$2$} 
\label{typ4}

We assume from now on that $p=2$ and $\bG$ is simple of 
type $F_4$. Let $F\colon \bG \rightarrow \bG$ be a Frobenius map such that
$\bG^F=F_4(q)$ where~$q$ is a power of~$2$. 
Let $\fY(\bG,s)$ be the set of all pairs $(\bT,s)$ where $\bT\subseteq
\bG$ is an $F$-stable maximal torus and $s\in \bT^F$. There are natural
actions of $\bG^F$ on $\fX(\bG,F)$ and on $\fY(\bG,F)$; see \cite[2.3.20 
and 2.5.12]{gema}. Since $\bG\cong \bG^*$ is ``self-dual'' (in the sense 
of \cite[Def.~1.5.17]{gema}), there is a bijective correspondence 
\[\fX(\bG,F) \;\, \mbox{mod $\bG^F$} \;\;\leftrightarrow \;\; 
\fY(\bG,F)\;\, \mbox{mod $\bG^F$} \qquad \mbox{(see 
\cite[Cor.~2.5.14]{gema})}.\]
If $(\bT,\theta)\leftrightarrow (\bT,s)$ correspond in this way, we write 
$R_{\bT,s}^\bG:= R_{\bT,\theta}^\bG$ (see \cite[Def.~2.5.17]{gema}).
In order to compute the characters of $\bG^F$, we shall assume that 
the following information is known and available in the form of tables:
\begin{itemize}
\item[{\bf (A1)}] Parametrisations of $\fY(\bG,F)$ and of all the 
conjugacy classes of $\bG^F$.
\item[{\bf (A2)}] The multiplicities $\langle R_{\bT,s}^{\bG},\rho\rangle$
for all $\rho\in \Irr(\bG^F)$ and $(\bT,s)\in \fY(\bG,F)$.
\item[{\bf (A3)}] The values $R_{\bT,s}^\bG(g)$ for all $g\in \bG^F$ and 
all $(\bT,s) \in \fY(\bG,F)$.
\item[{\bf (A4)}] For every regular $\bL\subsetneqq \bG$, the 
values $\psi(u)$ for $\psi\in \Irr(\bL^F)$, $u\in \bL_{\text{uni}}^F$.
\end{itemize}

\begin{rem} \label{rem31} The conjugacy classes of $\bG^F$ are determined
by Shinoda \cite{Shi}. The tables in \cite{Shi} provide the required 
classifications and parametrisations in~{\bf (A1)}. Since the center 
of~$\bG$ is trivial, the information in {\bf (A2)} is available via
Lusztig's ``Main Theorem~4.23'' in \cite{LuB}; see also \cite[\S 2.4, 
\S 4.2]{gema}. In order to obtain {\bf (A3)}, one uses the character 
formula in \cite[\S 4]{DeLu} (see also \cite[Theorem~2.2.16]{gema}) for 
the evaluation of $R_{\bT,s}^\bG(g)$. This involves the Green functions 
for $\bG$ and for groups of the form $\bH_s=C_\bG(s)$ where $s\in\bG^F$ 
is semisimple; note that, for our $\bG$, the centraliser of any
semisimple element is connected. By inspection of \cite[Table~III]{Shi}, 
we see that $\bH_s$ is either a maximal torus, or a regular subgroup (with 
a root system of type $F_4$, $B_3$, $C_3$, $A_1\times A_2$, $B_2$, $A_2$, 
$A_1 \times A_1$ or $A_1$) or~$\bH_s$ has a root system of type $A_2\times 
A_2$. The Green functions for $\bG^F$ itself have been determined by Malle 
\cite{Mal1}; for the other cases see L\"ubeck \cite[Tabelle~16]{lphd}. The
further technical issues in the evaluation of $R_{\bT,\theta}^\bG(su)$ are
discussed in \cite[\S 3]{pamq} and \cite[\S 2]{lphd} (for example, one has 
to deal with a sum over all $x\in \bG^F$ such that $x^{-1}sx\in \bT$); in 
\cite[\S 6]{lphd}, this is explained in detail for the groups $\bG^F=
\mbox{CSp}_6(q)$. Finally, the required values in {\bf (A4)} can be 
extracted from Enomoto \cite{En} (type~$B_2$), Looker \cite{Loo}, 
L\"ubeck \cite[Tabelle~27]{lphd} (type $B_3,C_3$) and Steinberg 
\cite{St} (type $A_1,A_2$).
\end{rem}



%
%
%

Representatives for the $\bG^F$-conjugacy classes of semisimple elements
are denoted by $h_0,h_1,\ldots,h_{76}$ in \cite[Table~II]{Shi}, where 
$h_0=1$; note that some of the~$h_i$ only occur according to whether 
$3\mid q-1$ or $3\mid q+1$, or when $q$ is sufficiently large. We now go 
through the list of these elements and explain how to determine the 
values of any unipotent character $\rho\in \Unip(\bG^F)$ on elements 
of the form $h_iu$ where $u \in C_\bG(h_i)^F$ is unipotent. 

In our group $\bG$, there are $37$ unipotent characters, where we use 
the notation in Lusztig's book \cite[p.~371/372]{LuB}). 

\begin{abs} \label{abs32} If $s=h_0=1$, then the values $\rho(u)$ for 
$\rho\in \Unip(\bG^F)$ and $u\in \bG_{\text{uni}}^F$ have been explicitly
determined by Marcelo--Shinoda; see \cite[Table~6.A]{MaSh}. This relies on 
the Green functions of~$\bG^F$ (available from \cite{Mal1}) and also on the 
knowledge of the ``generalised Green functions'' arising from Lusztig's
theory of character sheaves. An algorithm for the computation of those 
functions is described in \cite[\S 24]{L2e}; it involves the delicate 
matter of normalising certain ``$Y_\iota$-functions'' (defined in 
\cite[(24.2.3)]{L2e}). Marcelo--Shinoda \cite{MaSh} do not explain in 
detail how they found those normalisations. But using the argument of 
Hetz \cite[\S 4.1.4]{Het3} (where the analogous problem is solved for 
groups of type $E_6$ in characteristic~$2$), one obtains an independent
verification that the values in \cite[Table~5]{MaSh} are correct.
\end{abs}

\begin{abs} \label{abs34} Let $s=h_3$ (if $3\mid q-1$) or $s=h_{15}$ 
(if $3\mid q+1$). Then $\bH_s=C_\bG(s)$ has a root system of type
$A_2 \times A_2$. Let $u\in\bH_s^F$ be unipotent and $\bC$ be the 
$\bG$-conjugacy class of $su$. 

(a) Assume first that $u$ is not regular unipotent. By inspection
of \cite[Table~IV]{Shi}, we see that $C_\bG(su)$ is connected. So we
can apply Example~\ref{unifc1}, together with {\bf (A2)}, {\bf (A3)}, 
to determine $\rho(su)$ even for all $\rho \in \Irr(\bG^F)$.

(b) Now assume that $u$ is regular unipotent. We recall some facts 
from \cite[\S 7.6]{pamq}. (Note that, in \cite[\S 7.6]{pamq} it is 
assumed that $p\neq 2,3$ but the discussion works verbatim also for 
$p=2$.) The set $\bC^F$ splits into $3$ classes in $\bG^F$, which we
simply denote by $C_1,C_2,C_3$. We can choose the notation such that 
$C_1=C_1^{-1}$ and $C_2^{-1}=C_3$. Explicit representatives are 
described in \cite[Table~IV]{Shi}; we have $|C_\bG(g_i)^F|=3q^4$ for 
$g_i\in C_i$ and $i=1,2,3$. Let $\chi_0:=\varepsilon_{\bC^F}$ be the 
indicator function on the set $\bC^F$ (as in (\ref{sub00})). Let $1\neq 
\theta \in \K$ be a fixed third root of unity. Then we consider the 
following linear combinations of unipotent characters of $\bG^F$: 
\begin{align*}
\chi_1 & :=\textstyle\frac{1}{3}q^2\bigl([12_1]+
F_4^{\rm I\!I}[1]-[6_1]-[6_2]+2F_4[\theta]-F_4[\theta^2]\bigr),\\
\chi_2 & :=\textstyle\frac{1}{3}q^2\bigl([12_1]+
F_4^{\rm I\!I}[1]-[6_1]-[6_2]-F_4[\theta]+2F_4[\theta^2]\bigr).
\end{align*}
As discussed in \cite[\S 7.6]{pamq}, the class functions $\chi_1,\chi_2$ 
are (scalar multiples of) characteristic functions of $F$-invariant 
cuspidal character sheaves on $\bG$; furthermore, the values of
$\chi_0,\chi_1,\chi_2$ are given as follows:
\[ \renewcommand{\arraycolsep}{10pt} \begin{array}{ccccc} \hline 
& C_1 & C_2 & C_3 & g \in \bG^F\setminus \bC_s^F\\\hline 
\chi_0 & q^4 & q^4 & q^4 & 0 \\ \chi_1 & q^4 & q^4\theta & q^4\theta^2 
& 0 \\ \chi_2 & q^4 & q^4\theta^2 & q^4 \theta & 0 \\\hline\end{array}\]
Hence, $\;\;\varepsilon_{C_1}=\chi_0+\chi_1+\chi_2$, $\;
\varepsilon_{C_2}=\chi_0+\theta^2\chi_1+\theta\chi_2$, 
$\;\varepsilon_{C_3}=\chi_0+ \theta\chi_1+\theta^2\chi_2$.

Now let $\rho\in \Irr(\bG^F)$ be arbitrary and $g_i\in C_i$ for $i=1,2,3$.
Since $\chi_0$ is uniform by Theorem~\ref{unifc}, we can determine
$\langle \rho,\chi_0\rangle_{\bG^F}$ using {\bf (A2)}, {\bf (A3)} and 
the formula in (\ref{abs13}). The inner products of $\rho$ with
$\chi_1,\chi_2$ are known by the definition of $\chi_1,\chi_2$. Hence, 
we can explicitly work out $\rho(g_i)=\langle \rho,\varepsilon_{C_i}
\rangle_{\bG^F}$.
\end{abs}


\begin{abs} \label{abs33} Let $s=h_i$ where $i\not\in \{0,3,15\}$.
In these cases, $\bL=C_{\bG}(s)$ either is a maximal torus, or a proper 
regular subgroup with a root system of type $B_3$, $C_3$, $A_1\times A_2$,
$B_2$, $A_2$, $A_1\times A_1$ or~$A_1$. Let $u\in \bL^F$ be unipotent
and $\bC$ be the $\bG$-conjugacy class of $su$. Let $\rho\in \Unip(\bG^F)$.
In order to compute $\rho(su)$, we use Schewe's formula in (\ref{abs12}).
First note that, if $\psi\in \Irr(\bL^F)$ is such that $\big\langle 
R_{\bL\subseteq \bP}^{\bG} (\psi),\rho\big\rangle_{\bG^F}\neq 0$, then
we must have $\psi\in \Unip(\bL^F)$; see \cite[Prop.~3.3.21]{gema}. 
Furthermore, since $s$ is in the centre of $\bL^F$, we have $\psi(su)=
\psi(u)$. (This is a general property of unipotent characters; see
\cite[Prop.~2.2.20]{gema}.) Hence, Schewe's formula reads:
\[ \rho(su)=\sum_{\psi\in \Unip(\bL^F)} \big\langle R_{\bL\subseteq 
\bP}^{\bG}(\psi),\rho\big\rangle_{\bG^F}\psi(u).\]
By {\bf (A4)}, the values $\psi(u)$ for $\psi\in \Unip(\bL^F)$ and 
$u\in \bL_{\text{uni}}^F$ are explicitly known. The multiplicities 
$\langle R_{\bL\subseteq \bP}^{\bG}(\psi),\rho \rangle_{\bG^F}$ (for 
$\rho\in \Unip(\bG^F)$ and $\psi\in \Unip(\bL^F)$) can also be determined 
explicitly; see \cite[\S 4.6]{gema}, especially \cite[Prop.~4.6.18]{gema}. 
In Michel's version of {\sf CHEVIE} \cite{gap3jm}, this is available 
through the function {\tt LusztigInductionTable}. Let us illustrate 
this with an example.
\end{abs}

\begin{table}[htbp] \caption{Unipotent characters for type $B_2$ in
characteristic~$2$} \label{uniSp4}
\begin{center}
{\small $\renewcommand{\arraystretch}{1.1}
\begin{array}{c@{\hspace{7pt}}c@{\hspace{10pt}}c@{\hspace{10pt}}
c@{\hspace{10pt}}c@{\hspace{10pt}}c@{\hspace{10pt}}c@{\hspace{10pt}}} \hline
& A_1 & A_2 & A_{31} & A_{32} & A_{41} & A_{42}\\\hline
|C_{\bG}(u)^F|: & q^4(q^2{-}1)(q^4{-}1) & q^4(q^2{-}1) & q^4(q^2{-}1) & 
q^4 & 2q^2 & 2q^2 \\\hline
\psi_0 
& 1 & 1 & 1 & 1 & 1 & 1 \\
\psi_9 
&\frac{1}{2}q(q+1)^2 & \frac{1}{2}q(q+1) & \frac{1}{2}q(q+1)
& \frac{q}{2} & \frac{q}{2} &-\frac{q}{2} \\ 
\psi_{10} 
& \frac{1}{2}q(q-1)^2 & -\frac{1}{2}q(q-1)& -\frac{1}{2}q(q-1)
& \frac{q}{2} & \frac{q}{2} & -\frac{q}{2}\\ 
\psi_{11} 
&\frac{1}{2}q(q^2+1) & -\frac{1}{2}q(q-1) & \frac{1}{2}q(q+1)
& \frac{q}{2} & -\frac{q}{2} &\frac{q}{2} \\ 
\psi_{12} 
&\frac{1}{2}q(q^2+1) & \frac{1}{2}q(q+1) & 
-\frac{1}{2}q(q-1) & \frac{q}{2} & -\frac{q}{2} &\frac{q}{2} \\ 
\psi_{13} 
& q^4 & . & . & . & . & . \\ \hline
\multicolumn{7}{c}{\text{(See Enomoto \cite{En}; notation as in 
\cite[Examples~3.3.30 and~2.7.22]{gema}.)}}
\end{array}$} \end{center}
\end{table}

\begin{exmp} \label{exp33}  Let $\rho=F_4^{\rm I\!I}[1]
\in \Unip(\bG^F)$ (a cuspidal unipotent character). Let $s=h_{53}$; then 
$\bL=C_\bG(s)$ is a regular subgroup of type $B_2$, where $|\bL^F|=
q^4(q^2+1)(q^2-1)(q^4-1)$; see \cite[Table~III]{Shi}. We would like to 
determine the values $\rho(h_{53}u)$ where $u\in \bL^F$ is unipotent. 
The values of the unipotent characters of $\bL^F$ on unipotent elements 
are given by Table~\ref{uniSp4}. Using Michel's {\tt LusztigInductionTable}, 
we find that 
\[ \langle R_{\bL\subseteq \bP}^{\bG}(\psi_{10}),\rho\rangle_{\bG^F}=1
\qquad \mbox{and}\qquad \langle R_{\bL\subseteq \bP}^{\bG}(\psi_i),
\rho\rangle_{\bG^F}=0\quad \mbox{for $i\neq 10$}.\]
Hence, by Schewe's formula, we have $\rho(h_{53}u)=\psi_{10}(u)$.~---~A 
completely analogous procedure works for any $s=h_i$ as in (\ref{abs33}).
\end{exmp}

\section{Non-unipotent characters for $F_4$ in characteristic $2$} 
\label{twogr}


We keep the notation of the previous section, where $\bG$ is simple
of type~$F_4$ in characteristic $2$. We now explain how to determine the
values of the non-unipotent characters of~$\bG^F$. First we recall some
facts from Lusztig's classification of $\Irr(\bG^F)$. Let $s\in \bG^F$
be semisimple. Then we define $\cE(\bG^F,s)$ to be the set of all $\rho
\in \Irr(\bG^F)$ such that $\langle R_{\bT,s}^\bG, \rho\rangle\neq 0$ for
some $F$-stable maximal torus $\bT\subseteq \bG$ with $s\in \bT$. It is 
known that every $\rho\in \Irr(\bG^F)$ belongs to $\cE(\bG^F,s)$ for 
some~$s$; furthermore, $\cE(\bG^F,s)$ only depends on the $\bG^F$-conjugacy
class of~$s$. If $s,s'\in \bG^F$ are such that $\cE(\bG^F,s)\cap \cE(\bG^F,
s')\neq \varnothing$, then $s,s'$ are $\bG^F$-conjugate. (For all this
see, for example, \cite[\S 2.6]{gema}; also recall that $\bG\cong \bG^*$.) 
Finally, by the ``Main Theorem 4.23'' of \cite{LuB}, there is a 
bijection $\cE(\bG^F,s)\leftrightarrow \Unip(\bH_s^F)$, where $\bH_s=
C_\bG(s)$; this is called the ``Jordan decomposition'' of characters.
We now proceed in 4 steps, where we determine the following information:

\smallskip
{\bf Step 1}: The values of all the two-variable Green functions
$Q_{\bL\subseteq \bP}^\bG$.

{\bf Step 2}: The values $\rho(u)$ for all $\rho\in \Irr(\bG^F)$
and $u \in \bG_{\text{uni}}^F$.

{\bf Step 3}: The decomposition of $R_{\bL\subseteq \bP}^\bG(\psi)$
for any $\psi \in \Irr(\bL^F)$.

{\bf Step 4}: The values $\rho(g)$ for any $\rho\in
\Irr(\bG^F)$ and any $g\in \bG^F$.

\begin{abs} \label{step1} We show how Step 1 can be resolved. Assume 
that $\bL\subsetneqq \bG$ and let $\Unip(\bL^F)=\{\psi_1,\ldots,\psi_n\}$. 
The information in {\bf (A4)} (see Section~\ref{typ4}) shows, in particular,
that $n$ is also the number of conjugacy classes of unipotent elements of 
$\bL^F$. Let $v_1,\ldots,v_n$ be representatives of these classes. 
Then, again using {\bf (A4)}, we can also check that the matrix 
$(\psi_i(v_j))_{1\leq i,j\leq n}$ is invertible. (For an example, see
Table~\ref{uniSp4}.) Let $u_1,\ldots,u_N$ be representatives of the 
conjugacy classes of unipotent elements of $\bG^F$; we have $N=35$ by 
\cite[Theorem~2.1]{Shi}. Then we write the character formula 
(\ref{abs12})(a) as a system of equations:
\begin{equation*}
R_{\bL\subseteq \bP}^{\bG}(\psi_i)(u_k)=\sum_{j=1}^n c_j \, Q_{\bL
\subseteq \bP}^{\bG}(u_k,v_j)\psi_i(v_j)\;\;\, \mbox{for
$1\leq i\leq n, 1\leq k \leq N$},\tag{$\spadesuit$}
\end{equation*}
where $c_j:=[\bL^F:C_{\bL}(v_j)^F]$ for all~$j$.
On the other hand, as explained in (\ref{abs33}), we can determine the
multiplicities $m(\psi_i,\rho):=\langle R_{\bL\subseteq\bP}^\bG(\psi_i),
\rho \rangle_{\bG^F}$ for any $\rho \in \Unip(\bG^F)$. Hence, we obtain 
equations
\[ R_{\bL\subseteq \bP}^\bG(\psi_i)(u_k)=\sum_{\rho\in \Unip(\bG^F)}
m(\psi_i,\rho)\rho(u_k) \qquad \mbox{for $1\leq i \leq n, 1\leq k\leq N$}.\]
Consequently, since the values $\rho(u_k)$ for $\rho\in \Unip(\bG^F)$ are 
known by (\ref{abs32}), the values $R_{\bL \subseteq \bP}^{\bG}(\psi_i)
(u_k)$ can be computed explicitly. We can now invert ($\spadesuit$) and 
obtain all the values $Q_{\bL\subseteq \bP}^\bG(u_k,v_j)$ for 
$1\leq j \leq n$, $1\leq k\leq N$. (A similar argument appears in 
Malle--Rotilio \cite[\S 2.2]{MR}.)
\end{abs}

\begin{abs} \label{step2} We show how Step 2 can be resolved. As in the 
previous section, we consider the list of semisimple elements $h_0,h_1,
\ldots,h_{76}\in\bG^F$. Let $\rho \in \Irr(\bG^F)$. There is some $s\in 
\{h_0,h_1,\ldots,h_{76}\}$ such that $\rho\in \cE(\bG^F,s)$. If $s=h_0$ 
(the identity element), then $\rho$ is unipotent and the required values 
are known by (\ref{abs32}). Now assume that $s\in \{h_3,h_{15}\}$ where 
$C_\bG(s)$ has a root system of type $A_2\times A_2$. Then, by the 
discussion in \cite[Lemma~2.4.18]{gema} (which is drawn from Lusztig's 
book \cite{LuB}), we know that $\rho$ is a uniform class function. 
(The group $\bW_{\lambda,n}$ occurring in that discussion is isomorphic 
to the Weyl group of $C_\bG(s)$; see \cite[(2.5.10)]{gema} and note again 
that $\bG\cong \bG^*$.) Hence, the values $\rho(u)$ for $u\in 
\bG_{\text{uni}}^F$ are known by {\bf (A2)}, {\bf (A3)} in 
Section~\ref{typ4}. Finally, let $s=h_i$ where $i\not\in \{0,3,15\}$. Then, 
as in (\ref{abs33}), $\bL:=C_\bG(s)\subsetneqq \bG$ is a regular subgroup. 
In that case, Lusztig has shown that $\rho=\pm R_{\bL\subseteq \bP}^\bG
(\psi)$ for some $\psi\in \cE(\bL^F,s)$; see \cite[Theorem~3.3.22]{gema}. 
So, in order to determine $\rho(u)$ for $u\in \bG_{\text{uni}}^F$, we can 
use again the character formula (\ref{abs12})(a), combined with the 
knowledge of $Q_{\bL \subseteq \bP}^\bG$ (see Step~1) and the values 
$\psi(v)$ for $v\in\bL_{\text{uni}}^F$ (see {\bf (A4)}).
\end{abs}

\begin{abs} \label{step3} We show how Step 3 can be resolved. Assume 
that $\bL\subsetneqq \bG$ and let $\psi\in \Irr(\bL^F)$ be arbitrary. 
There is some semisimple $s\in \bL^F$ such that $\psi \in \cE(\bL^F,s)$. 
Let $\cE(\bG^F,s)=\{\rho_1,\ldots,\rho_r\}$. Then, by 
\cite[Prop.~3.3.20]{gema}, we have 
\begin{equation*}
R_{\bL\subseteq \bP}^\bG(\psi)=\sum_{i=1}^r m(\psi,\rho_i)\rho_i \qquad 
\mbox{where $m(\psi,\rho_i)\in \Z$ for $1\leq i\leq r$}.\tag{$*$}
\end{equation*}
If $s=1$ and $\psi\in \Unip(\bL^F)$, we can use Michel's
{\tt LusztigInductionTable}, as in (\ref{abs33}). Now assume that $s\neq 1$.
Then one could use the fact that $R_{\bL\subseteq \bP}^\bG$ commutes with
the Jordan decomposition of characters; see \cite[Theorem~4.7.2]{gema}.
But having the results of Steps~1 and~2 at our disposal, we can also argue
as follows. Let again $u_1,\ldots,u_N$ be representatives of the conjugacy 
classes of unipotent elements of~$\bG^F$. Using (\ref{abs12})(a), {\bf (A4)} 
and Step~1, we can compute the values:
\[ R_{\bL\subseteq \bP}^{\bG}(\psi)(u_k)=\sum_{v\in \bL_{\text{uni}}^F}
Q_{\bL\subseteq \bP}^{\bG}(u_k,v)\psi(v) \qquad\mbox{for $1\leq k 
\leq N$}.\]
Comparing with ($*$), we obtain equations
\[ \sum_{i=1}^r m(\psi,\rho_i)\rho_i(u_k)=R_{\bL\subseteq \bP}^{\bG}
(\psi)(u_k)=\mbox{known value} \qquad \mbox{for $1\leq k \leq N$}.\]
Using Step~2, we can check that the matrix $(\rho_i(u_k))_{1\leq i 
\leq r,1\leq k\leq N}$ has rank $r$, where $r\leq N$. (This would not
be true for $s=1$.) Hence, the above equations uniquely determine the 
numbers $m(\psi,\rho_i)$ for $1\leq i\leq r$. 
\end{abs}

\begin{abs} \label{step4} We show how Step 4 can be resolved. Let $\rho\in 
\Irr(\bG^F)$ and $g\in \bG^F$ be arbitrary. Let $i\in \{0,1,\ldots,76\}$ 
be such that $\rho\in \cE(\bG^F,h_i)$. If $i=0$, then $h_0=1$, $\rho$ is 
unipotent and we know the values of $\rho$ by Section~\ref{typ4}. Next,
let $i\in \{3,15\}$. Then, as already mentioned in (\ref{step2}), $\rho$ 
is uniform and so the values of $\rho$ are computable via {\bf (A2)}, 
{\bf (A3)}. Finally, let $i \not\in \{0,3,15\}$. Write $g=su=us$ where 
$s\in \bG^F$ is semisimple and $u\in \bG^F$ is unipotent. If $s=1$, then
the values $\rho(u)$ for $u\in \bG_{\text{uni}}^F$ are known by Step~2. 
Now let $s\neq 1$. If $C_\bG(s)$ has type $A_2\times A_2$, then $\rho(su)$ 
is already known by~(\ref{abs34}). Otherwise, we are in the situation 
of (\ref{abs33}) where $\bL:=C_\bG(s)\subsetneqq \bG$ is a regular 
subgroup. Let $\psi\in \Irr(\bL^F)$ and $(\bT,\theta)\in \fX(\bL,F)$ be
such that $\langle R_{\bT,\theta}^\bL,\psi\rangle_{\bL^F} \neq 0$; then, 
by \cite[Prop.~2.2.20]{gema}, we have $\psi(su)=\theta(s) \psi(u)$. So 
Schewe's formula, together with {\bf (A4)} and the result of Step~3, 
yield the value $\rho(su)$. 
\end{abs}


\smallskip
\noindent {\bf Acknowledgements}. I thank Jonas Hetz and Gunter Malle 
for comments on an earlier version. This article is a contribution to SFB-TRR 
195 by the DFG (Deutsche Forschungsgemeinschaft), Project-ID 286237555. 


\end{document}